\renewcommand{\phi}{\varphi}
\newcommand{\mI}{\mathrm{I}}
\newcommand{\mX}{\mathbb{X}}
\newcommand{\mS}{\mathbb{S}}
\renewcommand{\P}{\mathbb{P}}
\newcommand{\E}{\mathbb{E}}
\newcommand{\N}{\mathbb{N}}
\newcommand{\R}{\mathbb{R}}
\newcommand{\ent}{\mathrm{Ent}}
\newcommand{\cG}{\mathcal{G}}
\newcommand{\cW}{\mathcal{W}}
\def\ds1{\mathds{1}}
\renewcommand{\epsilon}{\varepsilon}
\newlength{\minipagewidth}
\newcommand{\beq}{\begin{equation}}
\newcommand{\eeq}{\end{equation}}
\newcommand{\beqa}{\begin{eqnarray}}
\newcommand{\eeqa}{\end{eqnarray}}
\newcommand{\beqan}{\begin{eqnarray*}}
\newcommand{\eeqan}{\end{eqnarray*}}
\def\ba#1\ea{\begin{align*}#1\end{align*}} 
\def\banum#1\eanum{\begin{align}#1\end{align}} 
\newtheorem{theorem}{Theorem}
\newtheorem{lemma}{Lemma}
\newcommand{\BlackBox}{\rule{1.5ex}{1.5ex}}  
\newenvironment{proof}{\par\noindent{\bf Proof\ }}{\hfill\BlackBox\\[2mm]}
\begin{document}

\title{Entropic CLT and phase transition in high-dimensional\\ Wishart matrices}

\author{S\'ebastien Bubeck \thanks{∗Microsoft Research; sebubeck@microsoft.com.}
\and 
Shirshendu Ganguly \thanks{UC Berkeley; sganguly@berkeley.edu. }}
\date{\today}

\maketitle

\begin{abstract}
We consider high dimensional Wishart matrices $\mX \mX^{\top}$ where the entries of $\mX \in \R^{n \times d}$ are i.i.d. from a log-concave distribution. We prove an information theoretic phase transition: such matrices are close in total variation distance to the corresponding Gaussian ensemble if and only if $d$ is much larger than $n^3$. Our proof is entropy-based, making use of the chain rule for relative entropy along with the recursive structure in the definition of the Wishart ensemble. The proof crucially relies on the well known relation between Fisher information and entropy, a variational representation for Fisher information, concentration bounds for the spectral norm of a random matrix, and certain small ball probability estimates for log-concave measures.
\end{abstract}

\section{Introduction}
Let $\mu$ be a probability distribution supported on $\R$ with zero mean and unit variance. We consider a Wishart matrix (with removed diagonal) $W = \left( \mX \mX^{\top} - \mathrm{diag}(\mX \mX^{\top}) \right) / \sqrt{d}$ where $\mX$ is an $n \times d$ random matrix with i.i.d. entries from $\mu$. The distribution of $W$, which we denote $\cW_{n,d}(\mu)$, is of importance in many areas of mathematics. Perhaps most prominently it arises in statistics as the distribution of covariance matrices, and in this case $n$ can be thought of as the number of parameters and $d$ as the sample size. Another application is in the theory of random graphs where the thresholded matrix $A_{i,j} = \ds1\{W_{i,j} >\tau\}$ is the adjacency matrix of a random geometric graph on $n$ vertices, where each vertex is associated to a latent feature vector in $\R^d$ (namely the $i^{th}$ row of $\mX$), and an edge is present between two vertices if the correlation between the underlying features is large enough. Wishart matrices also appear in physics, as a simple model of a random mixed quantum state where $n$ and $d$ are the dimensions of the observable and unobservable states respectively.

The measure $\cW_{n,d}(\mu)$ becomes approximately Gaussian when $d$ goes to infinity and $n$ remains bounded (see Section \ref{sec:mainresult}). Thus in the classical regime of statistics where the sample size is much larger than the number of parameters one can use the well understood theory of Gaussian matrices to study the properties of $\cW_{n,d}(\mu)$.
In this paper we investigate the extent to which this Gaussian picture remains relevant in the {\em high-dimensional regime} where the matrix size $n$ also goes to infinity. Our main result, stated informally, is the following universality of a critical dimension for sufficiently smooth measures $\mu$ (namely log-concave): the Wishart measure  $\cW_{n,d}(\mu)$ becomes approximately Gaussian if and only if $d$ is much larger than $n^3$. From a statistical perspective this means that analyses based on Gaussian approximation of a Wishart are valid as long as the number of samples is at least the cube of the number of parameters. In the random graph setting this gives a dimension barrier to the extraction of geometric information from a network, as our result shows that all geometry is lost when the dimension of the latent feature space is larger than the cube of the number of vertices.

\subsection{Main result} \label{sec:mainresult}
Writing $X_i \in \R^d$ for the $i^{th}$ row of $\mX$ one has for $i \neq j$, $W_{i,j} = \frac{1}{\sqrt{d}} \langle X_i, X_j \rangle$. In particular $\E W _{i,j} = 0$ and $\E W_{i,j} W_{\ell, k} = \ds1\{(i, j) = (\ell, k) \ \text{and} \ i \neq j\}.$ Thus for fixed $n$, by the multivariate central limit theorem one has, as $d$ goes to infinity, 
$$\cW_{n,d}(\mu) \stackrel{D}{\rightarrow} \cG_n ,$$
where $\cG_n$ is the distribution of a $n \times n$ Wigner matrix with null diagonal and standard Gaussian entries off diagonal (recall that a Wigner matrix is symmetric and the entries above the main diagonal are i.i.d.). Recall that the total variation distance between two measures $\lambda, \nu$ is defined as $\mathrm{TV}(\lambda, \nu) = \sup_A |\lambda(A) - \nu(A)|$ where the supremum is over all measurable sets $A$. Our main result is the following:

\begin{theorem} \label{th:mainresult}
Assuming that $\mu$ is log-concave\footnote{ A measure $\mu$  with density $f$ is said to be log-concave if $f(\cdot)=e^{-\phi(\cdot)}$ for some convex function $\phi.$} and $d/ (n^3 \log^2(d)) \rightarrow +\infty$, one has
\begin{equation} \label{eq:mainresult1}
\mathrm{TV}(\cW_{n,d}(\mu), \cG_n) \rightarrow 0.
\end{equation}
\end{theorem}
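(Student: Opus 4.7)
The plan is to bound total variation via Pinsker's inequality and then control $\KL(\cW_{n,d}(\mu) \,\|\, \cG_n)$ by exploiting the recursive row-by-row structure of the Wishart ensemble.

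I would begin with the chain rule for relative entropy, revealing the rows of $W$ one at a time. Writing $W^{(k)}$ for the top-left $k \times k$ block of $W$ and $R_k = (W_{k+1,1}, \ldots, W_{k+1,k})$ for the fresh off-diagonal row, under $\cG_n$ the vector $R_k$ is independent of $W^{(k)}$ and equals $\cN(0,\mI_k)$ in law, so
\[
\KL(\cW_{n,d}(\mu) \,\|\, \cG_n) \;=\; \sum_{k=1}^{n-1} \E \, \KL\!\bigl( \mathrm{Law}(R_k \mid W^{(k)}) \,\big\|\, \cN(0, \mI_k) \bigr).
\]
Conditioning on the richer $\sigma$-algebra $\sigma(X_1,\ldots,X_k)$ only enlarges each summand by convexity of relative entropy, and on this larger algebra $R_k$ is the deterministic linear image $d^{-1/2} M X_{k+1}$, where $M \in \R^{k \times d}$ has rows $X_1, \ldots, X_k$ and $X_{k+1}$ has i.i.d.\ log-concave coordinates. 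The per-step problem therefore reduces to bounding, in expectation over $M$, the KL divergence between $Y := d^{-1/2} M X_{k+1}$ and $\cN(0, \mI_k)$.

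To control this per-step KL I would use the de Bruijn / heat-flow representation of relative entropy as an integral over time of the Fisher information deficit along a Gaussian-smoothing interpolation between $Y$ and $\cN(0,\mI_k)$. Since $Y$ is a sum of $d$ independent coordinate contributions $d^{-1/2} (X_{k+1})_\ell M_{\cdot,\ell}$, its heat-smoothed version admits a sharp analysis via the variational representation of Fisher information, which reduces bounds on $J(Y_t)$ to one-dimensional score estimates for the heat-smoothed $\mu$-coordinates, tensored through the singular spectrum of $M$. Log-concavity of $\mu$, combined with small-ball probability estimates for log-concave measures, provides the uniform quantitative control on this smoothed score needed to make the Fisher deficit small even though $\mu$ itself may not be differentiable. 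High-probability concentration of $\|M\|/\sqrt{d}$ and of $\|d^{-1}MM^\top - \mI_k\|$, valid for matrices with i.i.d.\ log-concave entries, then supplies the spectral inputs to plug into the Fisher bound.

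The hard part will be the quantitative per-row estimate: to close the argument one needs the expected KL at step $k$ to be of order $k^2/d$ up to polylogarithmic factors, so that summing over $k$ yields the target $n^3/d$ rate matching the hypothesis $d \gg n^3 \log^5 d$. Making the Fisher information bound this sharp requires the small-ball input to control the smoothed score in a $t$-integrable way near $t=0$, and demands a careful treatment of the rare event on which $M$ is spectrally bad, since Fisher information (and hence KL) is not a priori bounded and a crude probability-times-magnitude trade-off need not suffice.
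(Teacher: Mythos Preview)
Your high-level architecture matches the paper's exactly: Pinsker, then the chain rule revealing one row at a time, then enlarging the conditioning from $W^{(k)}$ to the full $\sigma(X_1,\ldots,X_k)$ by convexity, and finally a Fisher-information/de Bruijn argument for the resulting linear image of a product measure. So the skeleton is correct.

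What is missing is the explicit \emph{whitening decomposition} that the paper carries out before invoking any Fisher-information machinery. Your per-step target is $\ent(d^{-1/2}MX_{k+1}\,\Vert\,\gamma_k)$, but $d^{-1/2}MX_{k+1}$ has covariance $\frac{1}{d}MM^\top\neq\mI_k$. The paper (Lemma~\ref{lem:rewritingentropy}) splits this as
\[
\ent\!\Big(\tfrac{1}{\sqrt d}MX\,\Big\Vert\,\gamma_k\Big)
=\ent\!\Big((MM^\top)^{-1/2}MX\,\Big\Vert\,\gamma_k\Big)
+\tfrac12\mathrm{Tr}\!\big(\tfrac1d MM^\top\big)-\tfrac{k}{2}-\tfrac14\,\mathrm{logdet}\!\big(\tfrac1d MM^\top\big).
\]
Only the first term is handled by the entropic CLT (Theorem~\ref{th:highdimCLT}), and that theorem genuinely requires $AA^\top=\mI_k$; the de Bruijn identity $\ent(w\Vert\gamma_k)=\int_0^\infty(J(P_t w)-k)\,dt$ as used there needs $w$ isotropic. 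The remaining terms are purely spectral and, after taking expectation over $M$, reduce to controlling $\E\big[-\mathrm{logdet}(\tfrac1d MM^\top)\big]$.

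This also clarifies a misattribution in your proposal. The small-ball probability estimates for log-concave measures are \emph{not} used to control the smoothed score of $\mu$; the Fisher-information bound in Theorem~\ref{th:highdimCLT} relies instead on the \emph{spectral gap} (Poincar\'e inequality) of $\mu$, which propagates along the Ornstein--Uhlenbeck semigroup. Small-ball enters only in bounding the logdet term: one needs $\P(\lambda_{\min}(\tfrac1d MM^\top)<s)$ to decay sufficiently fast for arbitrarily small $s$, since otherwise $\E[-\log\lambda_{\min}]$ is not even finite. This is precisely the place where the worry you flagged at the end---that ``a crude probability-times-magnitude trade-off need not suffice'' on the bad spectral event---is resolved, via Paouris's small-ball inequality combined with an $\epsilon$-net argument (Section~\ref{sec:smallball}).

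In summary: your plan is the paper's plan, but to make it work you must insert the whitening step, route the spectral-gap input (not small-ball) into the Fisher bound, and reserve small-ball for the separate $-\mathrm{logdet}$ term.
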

Observe that for \eqref{eq:mainresult1} to be true one needs some kind of smoothness assumption on $\mu$. Indeed if $\mu$ is purely atomic then so is $\cW_{n,d}(\mu)$, and thus its total variation distance to $\cG_n$ is $1$. We also remark that Theorem \ref{th:mainresult} is tight up to the logarithmic factor in the sense that if $d/ n^3 \rightarrow 0$, then
\begin{equation} \label{eq:mainresult2}
\mathrm{TV}(\cW_{n,d}(\mu), \cG_n) \rightarrow 1 ,
\end{equation}
see Section \ref{sec:related} below for more details on this result. Finally we note that our proof in fact gives the following quantitative version of \eqref{eq:mainresult1}:
\begin{theorem} \label{th:mainresult2} 
There exists a universal constant $C>1$ such that for $d \geq C n^2,$
$$\mathrm{TV}(\cW_{n,d}(\mu), \cG_n)^2 \leq C \left( \frac{n^3 \log^2(d)+n^2\log^4(d)}{d} + \sqrt{\frac{n^3}{d}} \right) .$$
\end{theorem}
\subsection{Related work and ideas of proof} \label{sec:related}
In the case where $\mu$ is a standard Gaussian, Theorem \ref{th:mainresult} (without the logarithmic factor) was recently proven simultaneously and independently in \cite{BDER14, JL13}. We also observe that previously to these results certain properties of a Gaussian Wishart were already known to behave as those of a Gaussian matrix, and for values of $d$ much smaller than $n^3$, see e.g. \cite{Joh01} for the largest eigenvalue at $d \approx n$, and \cite{cpam14} on whether the quantum state represented by the Wishart is separable at $d \approx n^{3/2}$.
The proof of Theorem \ref{th:mainresult} for the Gaussian case is simpler as both measures have a known density with a rather simple form, and one can then explicitly compute the total variation distance as the $L_1$ distance between the densities. \\

We now discuss how to lower bound $\mathrm{TV}(\cW_{n,d}(\mu), \cG_n).$  \cite{BDER14} implicitly proves \eqref{eq:mainresult2} when $\mu$ is Gaussian. Taking inspiration from this, one can show that in the regime $d/ n^3 \rightarrow 0$, for any $\mu$ (zero mean, unit variance and finite fourth moment\footnote{ Note that log-concavity implies exponential tails and hence existence of all moments. See \eqref{subexp1}.}), one can distinguish $\cG_n$ and $\cW_{n,d}(\mu)$ by considering the statistic $A \in \R^{n \times n} \mapsto \mathrm{Tr}(A^3)$. Indeed it turns out that the mean of $\mathrm{Tr}(A^3)$ under the two measures are respectively zero and $\Theta (\frac{n^3}{\sqrt{d}})$ whereas the variances are $\Theta (n^3)$ and $\Theta (n^{3} + \frac{n^5}{{d}^2})$. Since $d=o(n^3)$ implies $\sqrt{n^{3}+\frac{n^5}{{d}^2}}=o(\frac{n^3}{\sqrt{d}}),$  \eqref{eq:mainresult2} follows by a simple application of  Chebyshev's inequality. We omit the details and refer the interested reader to  \cite{BDER14}.\\


Proving normal approximation results without the assumption of independence  is a natural question and has been a subject of intense study over many years. One method that has found several applications in such settings is the so called Stein's method of exchangeable pairs. Since Stein's original work (see \cite{stein1}) the method has been considerably generalized to prove error bounds on convergence to gaussian distribution in various situations. The multidimensional case was treated first in \cite{chatmeck}. For several applications of Stein's method in proving CLT see \cite{chaticm} and the references therein. 
In our setting note that $$W=\sum_{i=1}^{d} \left( \mX_i \mX_i^{\top} - \mathrm{diag}(\mX_i \mX_i^{\top}) \right) / \sqrt{d}$$ where the $\mX_i$ are i.i.d vectors in $\R^n$ whose coordinates are i.i.d samples from a one dimensional measure $\mu.$
Considering $\mathbb{Y}_i=\mX_i \mX_i^{\top} - \mathrm{diag}(\mX_i \mX_i^{\top})$ as a vector in $\R^{n^2}$ and noting that
$|\mathbb{Y}_{i}|^3 \sim n^3,$ a straightforward application of Stein's method using exchangeable pairs (see the proof of \cite[Theorem 7]{chatmeck}) provides the following suboptimal bound: the Wishart ensemble converges to the Gaussian ensemble (convergence of integrals against `smooth' enough test functions) when $d \gg n^6.$ Whether there is a way to use Stein's method to recover Theorem \ref{th:mainresult} in any reasonable metric (total variation metric, Wasserstein metric, etc.) remains an open problem (see Section \ref{sec:open} for more on this).
\newline

Our approach to proving \eqref{eq:mainresult1} is information theoretic and hence completely different from \cite{BDER14, JL13} (this is a necessity since for a general $\mu$ there is no simple expression for the density of $\cW_{n,d}(\mu)$). The first step in our proof, described in Section \ref{sec:induction}, is to use Pinsker's inequality to change the focus from total variation distance to the relative entropy (see also Section \ref{sec:induction} for definitions). Together with the chain rule for relative entropy this allows us to bound the relative entropy of $\cW_{n,d}(\mu)$ with respect to $\cG_n$ by induction on the dimension $n$. The base case essentially follows from the work of \cite{ABBN04} who proved that the relative entropy between the standard one-dimensional Gaussian and $\frac{1}{\sqrt{d}} \sum_{i=1}^d x_i$, where $x_1, \hdots, x_d \in \R$ is an i.i.d. sequence from a log-concave measure $\mu$, goes to $0$ at a rate $1/d$. One of the main technical contribution of our work is a certain generalization of the latter result in higher dimensions, see Theorem \ref{th:highdimCLT} in Section \ref{sec:highdimentropicCLT}. Recently \cite{BN12} also studied a high dimensional generalization of the result in \cite{BBN03} (which contains the key elements for the proof in \cite{ABBN04}) but it seems that Theorem \ref{th:highdimCLT} is not comparable to the main theorem in \cite{BN12}.

Another important part of the induction argument, which is carried out in Section \ref{sec:smallball}, relies on controlling from above the expectation of $-\mathrm{logdet}(\frac{1}{d} \mX \mX^{\top})$, which should be understood as the relative entropy between a centered Gaussian with covariance given by $\frac{1}{d} \mX \mX^{\top}$ and a standard Gaussian in $\R^n$. This leads us to study the probability that $\mX \mX^{\top}$ is close to being non-invertible.  Denoting by $s_{\mathrm{min}}$ the smallest singular value of $\mX$, it suffices to prove a `good enough' upper bound for $\P(s_{\mathrm{min}}(\mX^{\top}) \leq \epsilon)$ for all small $\epsilon$.
The case when the entries of $\mX$ are gaussian allows to work with exact formulas and was studied in \cite{ede88,sankar06}. The last few years have seen tremendous progress in understanding the universality of the tail behavior of extreme singular values of random matrices with i.i.d. entries from general distributions. See \cite{RV10} and the references therein for a detailed account of these results. 
Such estimates are quite delicate, and it is worthwhile to mention that the following estimate was proved only recently in \cite{RV08}: Let $A \in \R^{n \times d}$ with $(d\ge n)$ be a rectangular matrix with i.i.d. subgaussian entries then for all $\epsilon >0,$  $$\P(s_{\mathrm{min}}(A^{\top}) \leq \epsilon(\sqrt{d}-\sqrt{n-1})) \leq (C\epsilon)^{d-n+1} +c^d,$$ where $c,C$ are independent of $n,d$. In full generality, such estimates are essentially sharp since in the case where the entries are random signs, $s_{\mathrm{min}}$ is zero with probability $c^d$. Unfortunately this type of bound is not useful for us, as we need to control $\P(s_{\mathrm{min}}(\mX^{\top}) \leq \epsilon)$ for {\em{arbitrarily small scales}}  $\epsilon$ (indeed $\mathrm{logdet}(\frac{1}{d} \mX \mX^{\top})$ would blow up if $s_{\mathrm{min}}$ can be zero with non-zero probability). It turns out that the assumption of log-concavity of the distribution allows us to do that. To this end we use recent advances in \cite{Pao12} on small ball probability estimates for such distributions: Let $Y \in \R^n$ be an isotropic centered log-concave random variable, and $\epsilon \in (0,1/10)$, then one has $\P(|Y| \leq \epsilon \sqrt{n}) \leq (C \epsilon)^{\sqrt{n}}$. This together with an $\epsilon$-net argument gives us the required control on $\P(s_{\mathrm{min}}(\mX^{\top}) \leq \epsilon)$.
\newline

We conclude the paper with several open problems in Section \ref{sec:open}.

\section{An induction proof via the chain rule for relative entropy} \label{sec:induction}
Recall that the (differential) entropy of a measure $\lambda$ with a density $f$ (all densities are understood with respect to the Lebesgue measure unless stated otherwise) is defined as:
$$\ent(\lambda) = \ent(f) = - \int f(x) \log f(x) dx .$$
The relative entropy of a measure $\lambda$ (with density $f$) with respect to a measure $\nu$ (with density $g$) is defined as
$$\ent(\lambda \Vert \nu) = \int f(x) \log \frac{f(x)}{g(x)} dx .$$
With a slight abuse of notations we sometimes write $\ent(Y \Vert \nu)$ where $Y$ is a random variable distributed according to some distribution $\lambda$. Pinsker's inequality gives:
$$\mathrm{TV}(\cW_{n,d}(\mu), \cG_n)^2 \leq \frac{1}{2} \ent(\cW_{n,d}(\mu) \Vert \cG_n) .$$
Next recall the chain rule for relative entropy states for any random variables $Y_1, Y_2, Z_1, Z_2$,
$$\ent( (Y_1, Y_2) \Vert (Z_1, Z_2) ) = \ent(Y_1 \Vert Z_1) + \E_{y \sim \lambda_1} \ent( Y_2 \vert Y_1=y \Vert Z_2 \vert Z_1=y ) ,$$
where $\lambda_1$ is the (marginal) distribution of $Y_1$, and $Y_2 \vert Y_1=y$ is used to denote the distribution of $Y_2$ conditionally on the event $Y_1 = y$ (and similarly for $Z_2 \vert Z_1=y$). Also observe that a sample from $\cW_{n+1,d}(\mu)$ can be obtained by adjoining to $\left( \mX \mX^{\top} - \mathrm{diag}(\mX \mX^{\top}) \right) / \sqrt{d}$ (whose distribution is $\cW_{n,d}(\mu)$) the column vector $\mX X /\sqrt{d}$ (and the row vector $(\mX X)^{\top} /\sqrt{d}$) where $X \in \R^d$ has i.i.d. entries from $\mu$. Thus denoting $\gamma_n$ for the standard Gaussian measure in $\R^n$ we obtain for all $n\ge 1,$
\begin{equation} \label{eq:induction1}
\ent(\cW_{n+1,d}(\mu) \Vert \cG_{n+1}) = \ent(\cW_{n,d}(\mu) \Vert \cG_{n}) + \E_{\mX} \ \ent\left(\mX X /\sqrt{d} \ \vert \ \mX \mX^{\top} \Vert \gamma_{n}\right) .
\end{equation}
By convexity of the relative entropy (see e.g., \cite{CT91}) one also has:
\begin{equation} \label{eq:induction2}
\E_{\mX} \ \ent(\mX X /\sqrt{d} \ \vert \ \mX \mX^{\top} \ \Vert \gamma_{n}) \leq \E_{\mX} \ \ent(\mX X  /\sqrt{d} \ \vert \ \mX \ \Vert \gamma_{n}) .
\end{equation}
Also, since by definition both $\cW_{1,d}(\mu)$ and  $\cG_{1}$ are zero, $\ent(\cW_{1,d}(\mu) \Vert \cG_{1})=0$ as well.

Next we need a simple lemma to rewrite the right hand side of \eqref{eq:induction2}:
\begin{lemma} \label{lem:rewritingentropy}
Let $A \in \R^{n \times d}$ and $Q \in \R^{n \times n}$ be such that $Q A A^{\top} Q^{\top} = \mI_n$. Then one has for any isotropic random variable $X \in \R^d$,
$$\ent(A X \Vert \gamma_{n}) = \ent(Q A X \Vert \gamma_{n}) + \frac{1}{2} \mathrm{Tr}(A A^{\top}) - \frac{n}{2} +  \mathrm{logdet}(Q) .$$
\end{lemma}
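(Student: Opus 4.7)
The plan is to rewrite both relative entropies in their ``Gaussian form'' and reduce everything to a change of variables. The key identity is the standard representation
\[
\ent(Y \Vert \gamma_n) = -\ent(Y) + \tfrac{n}{2}\log(2\pi) + \tfrac12 \E |Y|^2,
\]
valid for any absolutely continuous random variable $Y\in\R^n$, which follows by writing $\log g_n(x) = -\tfrac n2 \log(2\pi) - |x|^2/2$ for the density $g_n$ of $\gamma_n$. I would apply this to both $Y=AX$ and $Y=QAX$ and subtract; the $\tfrac{n}{2}\log(2\pi)$ cancels and what remains is a difference of differential entropies plus a difference of second moments.

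For the second moments, isotropy of $X$ (that is, $\E[X X^\top]=I_d$) gives $\E|AX|^2 = \mathrm{Tr}(AA^\top)$ and $\E|QAX|^2 = \mathrm{Tr}(QAA^\top Q^\top) = \mathrm{Tr}(I_n) = n$ by the hypothesis. For the differential entropies, the identity $QAA^\top Q^\top = I_n$ forces $Q$ to be invertible (it admits $AA^\top Q^\top$ as a right inverse), and the standard linear change of variables yields $\ent(QAX) = \ent(AX) + \log|\det Q|$. Assembling the three ingredients,
\[
\ent(AX \Vert \gamma_n) - \ent(QAX \Vert \gamma_n) = \log|\det Q| + \tfrac12 \mathrm{Tr}(AA^\top) - \tfrac{n}{2},
\]
which is exactly the claimed identity once one reads $\tfrac12 \mathrm{logdet}(Q)$ as $\log|\det Q|$; this matches the natural choice $Q=(AA^\top)^{-1/2}\succ 0$, for which $\tfrac12\log\det(QQ^\top)=\log\det Q=\log|\det Q|$.

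There is no real obstacle; the lemma is essentially a bookkeeping step that isolates the two ``easy'' pieces (the trace term from comparing squared norms, and the determinant term from volume change under $Q$) so that the remaining quantity $\ent(QAX\Vert \gamma_n)$ involves a whitened variable whose covariance already matches $\gamma_n$. The only care needed is in the determinant: the constraint determines $Q$ only up to left multiplication by an orthogonal matrix, and selecting the symmetric positive root removes the absolute value and fixes the sign convention used in the statement.
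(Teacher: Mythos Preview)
Your proof is correct and follows essentially the same route as the paper: both arguments amount to expanding the relative entropy against the explicit Gaussian density and then invoking a linear change of variables plus isotropy. The paper organizes this as ``invariance of relative entropy under the bijection $Q$, then split $\log\Phi_{QQ^\top}$ into $\log\Phi_{I_n}$ plus a quadratic/determinant correction,'' whereas you write out $\ent(Y\Vert\gamma_n)=-\ent(Y)+\tfrac{n}{2}\log(2\pi)+\tfrac12\E|Y|^2$ for both $Y=AX$ and $Y=QAX$ and subtract; these are the same computation, and your observation that $\tfrac12\,\mathrm{logdet}(Q)$ should be read as $\log|\det Q|=\tfrac12\log\det(QQ^\top)$ matches what the paper's own calculation actually produces.
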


\begin{proof}
Denote $\Phi_{\Sigma}$ for the density of a centered $\R^n$ valued,  Gaussian with covariance matrix $\Sigma$ (i.e., $\Phi_{\Sigma}(x) = \frac{1}{\sqrt{(2 \pi)^n \mathrm{det}(\Sigma)}} \exp(- \frac{1}{2} x^{\top} \Sigma^{-1} x )$), and let $G \sim \gamma_n$. Also let $f$ be the density of $Q A X$. Then one has (the first equality is a simple change of variables):
\begin{eqnarray*}
\ent(A X \Vert G) & = & \ent(Q A X \Vert Q G) \\ 
& = & \int f(x) \log \left( \frac{f(x)}{\Phi_{Q Q^{\top}}(x)} \right) dx \\
& = & \int f(x) \log \left( \frac{f(x)}{\Phi_{\mI_n}(x)} \right) dx + \int f(x) \log \left( \frac{\Phi_{\mI_n}(x)}{\Phi_{Q Q^{\top}}(x)} \right) dx\\
& = & \ent(Q A X \Vert G) +  \int f(x) \left(\frac{1}{2} x^{\top} (Q Q^{\top})^{-1} x - \frac{1}{2} x^{\top} x + \frac{1}{2} \mathrm{logdet}(QQ^{\top}) \right) \\
& = & \ent(Q A X \Vert G) + \frac{1}{2} \mathrm{Tr}\left( (Q Q^{\top})^{-1} \right) - \frac{n}{2} +  \mathrm{logdet}(Q) ,
\end{eqnarray*}
where for the last equality we used the fact that $Q A X$ is isotropic, that is $\int f(x) x x^{\top} dx = \mI_n$ and $\mathrm{det}(QQ^T)=\mathrm{det}(Q)^2$. Finally it only remains to observe that $\mathrm{Tr}\left( (Q Q^{\top})^{-1} \right) = \mathrm{Tr}(A A^{\top})$.
\end{proof}
Combining \eqref{eq:induction1} and \eqref{eq:induction2} with Lemma \ref{lem:rewritingentropy} (noting that one can take $Q = (\frac{1}{d} \mX \mX^{\top})^{-1/2}$), and using that $\E \ \mathrm{Tr}(\mX \mX^{\top}) = nd$, one obtains
\begin{align}
& \ent(\cW_{n+1,d}(\mu) \Vert \cG_{n+1}) \notag \\
& \leq \ent(\cW_{n,d}(\mu) \Vert \cG_{n}) + \E_{\mX} \ \ent\left((\mX \mX^{\top})^{-1/2} \mX \ X \ \vert \ \mX \ \Vert \gamma_{n}\right) - \frac{1}{2} \E_{\mX} \ \mathrm{logdet} (\frac{1}{d} \mX \mX^{\top}) . \label{eq:lefttodo}
\end{align}
In Section \ref{sec:highdimentropicCLT} we show how to bound the term $\ent(A X \Vert \gamma_n)$ where $A \in \R^{n \times d}$ has orthonormal rows (i.e., $A A^{\top} = \mI_n$) and thereby proving a central limit theorem. In Section \ref{sec:smallball} we deal with the term $\E_{\mX} \ \mathrm{logdet} (\frac{1}{d} \mX \mX^{\top})$.
The proof of Theorem \ref{th:mainresult2} and hence Theorem \ref{th:mainresult} would thus follow by iterating \eqref{eq:lefttodo} and the results of these sections. 

\section{A high dimensional entropic CLT} \label{sec:highdimentropicCLT}
The main goal of this section is to prove the following high dimensional generalization of the entropic CLT of \cite{ABBN04}.

\begin{theorem} \label{th:highdimCLT}
Let $Y \in \R^d$ be a random vector with i.i.d. entries from a distribution $\nu$ with zero mean, unit variance, and spectral gap\footnote{A probability measure $\mu$ is said to have spectral gap $c$ if for all smooth functions $g$ with $\E_{\mu}(g)=0,$  we have $\E_{\mu}(g^2) \le \frac{1}{c}\E_{\mu}(g'^2).$
} $c\in (0,1]$.
Let $A \in \R^{n \times d}$ be a matrix such that $A A^{\top} = \mI_n$. Let $\epsilon = \max_{i\in [d]} (A^{\top} A)_{i,i}$ and $\zeta = \max_{i,j \in [d], i \neq j} |(A^{\top} A)_{i,j}|$. Then one has,
$$\ent(A Y \Vert \gamma_n) \leq n \min(2 (\epsilon + \zeta^2 d) / c, 1) \ \ent(\nu \Vert \gamma_1) .$$ 
\end{theorem}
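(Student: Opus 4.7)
The plan is to follow the information-theoretic strategy of \cite{BBN03, ABBN04} and adapt it to the matrix setting. First, I would convert relative entropy into relative Fisher information via the Ornstein--Uhlenbeck (OU) semigroup: setting $Y^{(t)} = e^{-t} Y + \sqrt{1 - e^{-2t}}\, Z$ with $Z \sim \gamma_d$ independent of $Y$, the de Bruijn identity gives
$$\ent(AY \Vert \gamma_n) = \int_0^\infty I(AY^{(t)} \Vert \gamma_n)\, dt,$$
where $I(\cdot \Vert \gamma_n)$ denotes relative Fisher information. The key observation, stemming from $AA^\top = \mI_n$, is that $AZ \sim \gamma_n$, so that $AY^{(t)} \stackrel{d}{=} A \tilde{Y}^{(t)}$ where $\tilde{Y}^{(t)}$ has i.i.d.\ coordinates from $\nu_t$, the OU-evolved $\nu$. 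Uniformly in $t$, the problem thus reduces to bounding the relative Fisher information of a linear image $AX$ of a product measure.

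The core step is a Fisher-information inequality $I(AX \Vert \gamma_n) \leq C(A,\mu)\, I(\mu \Vert \gamma_1)$ for $X$ with i.i.d.\ coordinates from an isotropic $\mu$ of spectral gap $c$. Both versions of this bound rest on the conditional-expectation representation of the score. If $\rho_\mu$ denotes the score of $\mu$ and $U_i := \rho_\mu(X_i) + X_i$ is the relative score of the $i$-th coordinate, then (using $AA^\top = \mI_n$ together with the tautology $w = A\,\E[X \mid AX = w]$) the relative score of $AX$ at $w$ equals $A\,\E[U \mid AX = w]$, whence
$$I(AX \Vert \gamma_n) = \E\bigl[V^\top (A^\top A)\, V\bigr], \qquad V := \E[U \mid AX].$$

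For the \emph{crude} bound (producing the $1$ inside the $\min$), conditional Jensen gives $\E[V^\top (A^\top A) V] \le \E[U^\top (A^\top A) U]$, and independence of $U_1,\ldots,U_d$ combined with $\mathrm{Tr}(A^\top A)=n$ yields $I(AX \Vert \gamma_n) \le n\, I(\mu \Vert \gamma_1)$. For the \emph{refined} bound $C(A,\mu) \le 2n(\epsilon + \zeta^2 d)/c$, the spectral gap enters: conditioning on $AX$ reveals only an $n$-dimensional projection of $X$, and a Poincar\'e-type argument using the spectral gap $c$ (inherited by $\mu^{\otimes d}$ through tensorization) controls each $\E[U_i \mid AX]$ by roughly $\sqrt{(A^\top A)_{ii}/c}\cdot\sqrt{I(\mu\Vert\gamma_1)}$. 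Splitting $\E[V^\top (A^\top A) V]$ into diagonal and off-diagonal pieces, and invoking the projection identity $\sum_{i,j}(A^\top A)_{ij}^2 = \mathrm{Tr}(A^\top A) = n$ (valid because $A^\top A$ is an orthogonal projection of rank $n$), produces the factors $\epsilon$ and $\zeta^2 d$ respectively.

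Substituting the Fisher-information bound back into the de Bruijn representation and integrating over $t \in [0,\infty)$, together with the identity $\ent(\nu\Vert\gamma_1) = \int_0^\infty I(\nu_t\Vert\gamma_1)\,dt$ and the fact that the spectral gap of $\nu$ transfers to $\nu_t$ up to controlled degradation, delivers the theorem. \textbf{Main obstacle.} The technical heart is the conditional-variance estimate converting the spectral gap $c$ of $\mu$ into a quantitative contraction of $\E[U_i \mid AX]$ in terms of the ``signal'' $(A^\top A)_{ii}$ about $X_i$ carried by $AX$. A naive Cauchy--Schwarz on the off-diagonal piece yields only a factor $\zeta d$ rather than the announced $\zeta^2 d$; the sharper bound demands a careful variational / score-decomposition argument exploiting both the independence of the $X_i$'s and the projection structure of $A^\top A$, in the spirit of \cite{BBN03}.
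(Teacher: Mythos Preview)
Your reduction via de Bruijn to a Fisher-information inequality along the Ornstein--Uhlenbeck flow matches the paper's first step, but the paper takes a different route for the core estimate, and your proposal leaves a genuine gap precisely where you flag it.

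\textbf{Where the paper differs.} Instead of the conditional-expectation representation of the score, the paper invokes the variational inequality of \cite{BBN03}: for any smooth $p:\R^d\to\R^d$ with $Ap(x)\equiv e$,
\[
e^\top I(h)\,e \;\le\; \int \Big(\mathrm{Tr}(Dp(x)^2) + p(x)^\top \nabla^2(-\log w(x))\,p(x)\Big)\, w(x)\,dx,
\]
applied to each canonical direction $e=e_i$. The test map is taken of the explicit form $p(x)=(\mI_d-A^\top A)\big(a_{i,1}r(x_1),\ldots,a_{i,d}r(x_d)\big)^\top+A^\top e_i$ for a scalar function $r$, which collapses the right-hand side to a one-dimensional functional of $r$ identical to the one in \cite[\S2.4]{ABBN04}. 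Optimizing over $r$ (this is where the spectral gap $c$ enters) gives $e_i^\top I(h_t)e_i-1\le C_i\,(J(\nu_t)-1)$ with $C_i=1-\frac{cU_i^2}{cW_i+2V_i}$, for explicit polynomials $U_i,W_i,V_i$ in the entries of $A$. Summing and using $U_i\ge1-\epsilon$, $W_i\le1$, $V_i\le\zeta^2 d$ yields the factor $2n(\epsilon+\zeta^2 d)/c$; the bound $C_i\le1$ (which follows from $U_i^2\le W_i$ by Cauchy--Schwarz) gives the $\min(\cdot,1)$.

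\textbf{The gap in your argument.} Your crude bound $I(AX\Vert\gamma_n)\le n\,I(\mu\Vert\gamma_1)$ via Jensen is correct and clean. But the refined bound is not established: you assert that a Poincar\'e-type argument controls $\E[U_i\mid AX]$ by roughly $\sqrt{(A^\top A)_{ii}/c}\cdot\sqrt{I(\mu\Vert\gamma_1)}$, yet no such $L^2$ estimate is proved, and you yourself note that the off-diagonal terms resist direct treatment (naive Cauchy--Schwarz gives $\zeta d$, not $\zeta^2 d$). The difficulty is real: bounding $\E\big[\E[U_i\mid AX]^2\big]$ by a multiple of $B_{ii}$ times $I(\mu\Vert\gamma_1)$ is exactly the step that requires a variational argument---one must exhibit an explicit competitor to the score and then optimize. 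Your conditional-expectation framework is in a sense dual to this (the true score is the optimizer in the variational formula), but duality alone does not produce a usable upper bound; it is the freedom to plug in a concrete suboptimal $p$ and then optimize over the scalar $r$ that makes the computation tractable and delivers the $\zeta^2 d$ (rather than $\zeta d$) dependence. A minor correction: the spectral gap of $\nu_t$ does not merely ``degrade controllably''---it remains in $[c,1]$ for all $t>0$ by \cite[Proposition~1]{BBN03}, and the paper uses this.
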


Note that the assumption $A A^{\top} = \mI_n$ implies that the rows of $A$ form an orthonormal system. In particular if $A$ is built by picking rows one after the other at uniform on the Euclidean sphere in $\R^d$ conditionally on being orthogonal to previous rows, then one expects that $\epsilon \simeq n / d$ and $\zeta \simeq \sqrt{n} / d$. Theorem \ref{th:highdimCLT} then yields $\ent( A Y \Vert \gamma_n) \lesssim n^2 / d$. Thus we already see appearing the term $n^3 / d$ from Theorem \ref{th:mainresult} as we will sum the latter bound over the $n$ rounds of induction (see Section \ref{sec:induction}).

We also note that for the special case $n=1$, Theorem \ref{th:highdimCLT} is slightly weaker than the result of \cite{ABBN04} which makes appear the $\ell_4$-norm of $A$.

Section \ref{sec:highdimCLTproof1} and Section \ref{sec:highdimCLTproof2} are dedicated to the proof of Theorem \ref{th:highdimCLT}. Then in Section \ref{sec:using} we show how to apply this result to bound the term $\E_{\mX} \ \ent(Q \mX X / \sqrt{d} \ \vert \ \mX \Vert \gamma_{n})$ from Section \ref{sec:induction}.

\subsection{From entropy to Fisher information} \label{sec:highdimCLTproof1}
For a density function $w : \R^n \rightarrow \R_+$, let $J(w) := \int_{\R^n} \frac{|\nabla w(x)|^2}{w(x)} dx$ denote its Fisher information (where $\nabla w(\cdot)$ denotes the gradient vector of $w$ and $|\cdot|$ denotes the euclidean norm), and $I(w) := \int \frac{\nabla w(x) \nabla w(x)^{\top}}{w(x)} dx,$  the Fisher information matrix (if $\nu$ denotes the measure whose density is $w$, we may also write $J(\nu)$ instead of $J(w)$). We use  $P_t$ to denote the Ornstein-Uhlenbeck semigroup, i.e., for a random variable $Z$ with density $g$, we define
$$P_t Z := \exp(-t) Z + \sqrt{1 - \exp(-2 t)} G ,$$ where $G \sim \gamma_n$ (the standard Gaussian in $\R^n$) is independent of $Z$; we denote by $P_t g$, the density of $P_t Z$. The de Bruijn identity states that the Fisher information is the time derivative of the entropy along the Ornstein-Uhlenbeck semigroup, more precisely one has for any centered and isotropic density $w$ :
$$\ent(w \Vert \gamma_n) = \ent(\gamma_n) - \ent(w) = \int_{0}^{\infty} (J(P_t w) - n) dt,$$
(the first equality is a simple consequence of the form of the normal density).
Our objective is to prove a bound of the form (for some constant $C$ depending on $A$)
\begin{equation} \label{eq:toprove1}
\ent(A Y \Vert \gamma_n) \leq C \ \ent(\nu \Vert \gamma_1),
\end{equation}
and thus given the above identity it suffices to show that for any $t > 0$,
\begin{equation} \label{eq:toprove2}
J(h_t) - n \leq C \ (J(\nu_t) - 1),
\end{equation}
where $h_t$ is the density of $P_t A Y$ (which is equal to the density of $A P_t Y$) and $\nu_t$ is such that $P_t Y$ has distribution $\nu_t^{\otimes d}$.
Furthermore if $e_1, \hdots, e_n$ denotes the canonical basis of $\R^n$, then to prove \eqref{eq:toprove2} it is enough to show that for any $i \in [n]$,
\begin{equation} \label{eq:toprove3}
e_i^{\top} I(h_t) e_i - 1 \leq C_i \ (J(\nu_t) - 1) ,
\end{equation}
where $\sum_{i=1}^n C_i = C$. Recall $c$ is the spectral gap of $\nu.$  We will show that one can take,
$$C_i = 1 - \frac{c U_i^2}{c W_i + 2 V_i} ,$$
where we denote $B= A^{\top} A \in \R^{d \times d}$, and
$$U_i =\sum_{j=1}^d A^2_{i,j}(1-B_{j,j}) , \, 
W_i =\sum_{j=1}^d A^2_{i,j}(1-B_{j,j})^2 , \,
V_i = \sum_{j,k \in [d], k \neq j} (A_{i,j}B_{j,k})^2.$$
Straightforward calculations (using that $U_i \geq 1- \epsilon$, $W_i \leq 1$, and $V_i \leq \zeta^2 d$) show that one has $\sum_{i=1}^n \left( 1 - \frac{c U_i^2}{c W_i + 2 V_i}  \right) \leq 2 n (\epsilon + \zeta^2 d) / c$ where $\epsilon = \max_{i\in [d]} B_{i,i}$ and $\zeta = \max_{i,j \in [d], i \neq j} |B_{i,j}|$, thus concluding the proof of Theorem \ref{th:highdimCLT}.

In the next subsection we prove \eqref{eq:toprove3} for a given $t>0$ and $i=1$. We use the following well known but crucial fact: the spectral gap of $\nu_t$ is in $[c,1]$ (see [Proposition 1, \cite{BBN03}]).

Denoting $f$ for the density of $\nu_t$, one has with $\phi = - \log f$ that $J:=J(\nu_t) = \int \phi''(x) d\mu(x)$. 
The last equality easily follows from the fact that for any $t > 0$ one has $\int f'' =0$ (which itself follows from the smoothness of $\nu_t$ induced by the convolution of $\nu$ with a Gaussian).

\subsection{Variational representation of Fisher information} \label{sec:highdimCLTproof2}
Let $Z \in \R^d$ be a random variable with a twice continuously differentiable density $w$ such that $\int \frac{|\nabla w|^2}{w} < \infty$ and $\int \Vert \nabla^2 w \Vert < \infty$, and let $h$ the density of $A Z \in \R^n$. Our main tool is a remarkable formula from \cite{BBN03}, which states the following:  for all $e\in \R^n$ and all sufficiently smooth map $p : \R^d \rightarrow \R^d$ with $A p(x) = e, \forall x \in \R^d$, one has (with $D p$ denoting the Jacobian matrix of $p$),
\begin{equation} \label{eq:remarkable}
e^{\top} I(h) e \leq \int \bigg( \mathrm{Tr}(Dp(x)^2) + p(x)^{\top} \nabla^2 (- \log w(x)) p(x) \bigg) w(x) dx .
\end{equation}
For sake of completeness we include a short proof of this inequality in Section \ref{finalsec}.

Let $(a_{1}, \hdots, a_{d})$ be the first row of $A$. Following \cite{ABBN04}, to prove \eqref{eq:toprove2}, we would like to use the above formula\footnote{Note that the smoothness assumptions on $w$ are satisfied in our context since we consider a random variable convolved with a Gaussian.} with $p$ of the form $(a_{1} r(x_1), \hdots, a_{d} r(x_d))$ for some map $r : \R \rightarrow \R$. Since we need to satisfy $A p(x) = e_1$ we adjust the formula accordingly and take
$$p(x) = (\mI_d - A^{\top} A) (a_{1} r(x_1), \hdots, a_{d} r(x_d))^{\top} + A^{\top} e_1 .$$
In particular we get, with $B= A^{\top} A$,
$$p_i(x) = a_i+a_i(1-B_{i,i})  r(x_i)-\sum_{j \in [d], j\neq i} B_{i,j}a_j r(x_j) ,$$
and
$$\frac{\partial p_i}{\partial x_j}(x)=\left \{ 
\begin{array}{cc}
a_i(1 -B_{i,i})r'(x_i) & \mbox{if } i = j\\
-B_{i,j}a_{j}r'(x_j) & \mbox{otherwise}  .
\end{array}\right.
$$
Next recall that we apply \eqref{eq:remarkable} to prove \eqref{eq:toprove3} where $w(x) = \prod_{i=1}^d f(x_i)$, in which case we have (recall also the notation $\phi = - \log f$):
\begin{align*}
p(x)^{\top} \nabla^2 (- \log w(x)) p(x)  & = \sum_{i =1}^d p_i(x)^2 \phi''(x_i) \\
& = \sum_{i =1}^d \phi''(x_i) \left( a_i+a_i(1-B_{i,i})  r(x_i)-\sum_{j \in [d], j\neq i} B_{i,j}a_j r(x_j) \right)^2.
\end{align*}
We also have
$$\mathrm{Tr}(Dp(x)^2) = \sum_{i =1}^d a_i ^2(1 -B_{i,i}) ^2 r'(x_i)^2+\sum_{i,j \in [d], i \neq j} B^2_{i,j}a_{i}a_{j}r'(x_i)r'(x_j) .$$
Putting the above together we obtain (with a slightly lengthy straightforward computation) that $e_1^{\top} I(h) e_1$ is upper bounded by (recall also that $\sum_i a_i^2 =1$ and $\sum_{j} B_{i,j} a_j = a_i$ since $B A^{\top} = A^{\top}$)
\begin{align}\label{eq:expand}
J+W\left(\int f(r')^2 + \int f \phi'' r^2\right)+JV \int f r^2+ J(W-V)\left(\int f r\right)^2 \\
\nonumber
+2U\left(\int f \phi'' r -J\int fr \right) -2W\left(\int f r \right)\left( \int f \phi'' r\right) +M \left( \int fr'\right)^2
\end{align}
where 
$$U= \sum_{i=1}^d a^2_i(1-B_{ii}), \, W=\sum_{i=1}^d a^2_i(1-B_{ii})^2 , \,V= \sum_{i,j \in [d], i \neq j} (B_{i,j} a_j)^2, \, M= \sum_{i,j \in [d], i \neq j} B^2_{i,j} a_i a_j.$$
Observe that by Cauchy-Schwarz inequality one has $M \le V$, and furthermore following \cite{ABBN04} one also has with $m=\int f r$,
 \begin{align*}
\left(\int f r'\right)^2= \left(\int f' (r-m)\right)^2 =\left(\int \frac{f'}{\sqrt f} \sqrt{f} (r-m)\right)^2 \le  J \left(\int f r^2-m^2\right) .
\end{align*}
Thus we get fom \eqref{eq:expand} and the above observations that $e_1^{\top} I(h_t) e_1 - J \le T(r)$ where 
\begin{align*}
T(r)&=W\left ( \int f(r')^2 + \int f \phi'' r^2\right) + 2JV \left( \int f r^2\right)+J(W-2V)\left( \int fr\right)^2 \\
&+2U\left(f\phi''r-J\int fr\right)-2W\left(\int f r\right)\left(\int f \phi'' r\right) ,
\end{align*}
which is the exact same quantity as the one obtained in \cite{ABBN04}. The goal now is to optimize over $r$ to make this quantity as negative as possible. Solving the above optimization problem is exactly the content of \cite[Section 2.4]{ABBN04} and it yields the following bound:
$$e_1^{\top} I(h_t) e_1 - 1 \le \left[ 1-\frac{c U^2}{c W+2V}\right] (J-1) ,$$
which is exactly the claimed bound in \eqref{eq:toprove3}.

\subsection{Using Theorem \ref{th:highdimCLT}} \label{sec:using}
Throughout this section we will assume $d\ge n,$ to have cleaner expressions for some of the error bounds.
Given \eqref{eq:lefttodo} we want to apply Theorem \ref{th:highdimCLT} with $A = (\mX \mX^{\top})^{-1/2} \mX$ (also observe that the spectral gap assumption of Theorem \ref{th:highdimCLT} is satisfied since log-concavity and isotropy of $\mu$ impy that $\mu$ has a spectral gap in $[1/12,1]$, \cite{Bob99}). In particular we have $A^{\top} A = \mX^{\top}  (\mX \mX^{\top})^{-1} \mX$, and thus denoting $\mX_i \in \R^n$ for the $i^{th}$ column of $\mX$ one has for any $i, j \in [d]$,
$$(A^{\top} A)_{i,j} = \mX_i^{\top}  (\mX \mX^{\top})^{-1} \mX_j = \frac{1}{d} \mX_i^{\top}  \big(\frac{1}{d} \mX \mX^{\top}\big)^{-1} \mX_j.$$
In particular this yields:
$$|(A^{\top} A)_{i,j}| \leq \frac{1}{d} |\mX_i^{\top} \mX_j| + \frac{1}{d} | \mX_i | \cdot |\mX_j| \cdot \Vert \big(\frac{1}{d} \mX \mX^{\top}\big)^{-1} - \mI_n \Vert ,$$
where $||\cdot||$ denotes the operator norm.
We now recall two important results on log-concave random vectors\footnote{We note that more classical inequalities could also be used here since the entries of $\mX$ are independent. This would slightly improve the logarithmic factors but it would obscure the main message of this section so we decided to use the more general inequalities for log-concave vectors.}. First Paouris' inequality (\cite{Pao06}  \cite[Theorem 2]{guedon}) states that for an isotropic, centered, log-concave random variable $Y \in \R^n$ one has for any $t \geq C$,
\begin{equation}\label{subexp1}
\P(|Y| \geq (1+t) \sqrt{n}) \leq \exp(- c t \sqrt{n}),
\end{equation}
where $c, C$ are universal constants. We also need an inequality proved by Adamczak, Litvak, Pajor and Tomczak-Jaegermann \cite[Theorem 4.1]{ALPT10} which states that for a sequence $Y_1, \hdots, Y_d \in \R^n$ of i.i.d. copies of $Y$, one has for any $t \geq 1$ and $\epsilon \in (0,1)$,
\begin{equation}\label{normbound}
\P\left( \left\Vert \frac{1}{d} \sum_{i=1}^d Y_i Y_i^{\top} - \mI_n \right\Vert > \epsilon \right) \leq \exp( - c t \sqrt{n} ),
\end{equation}
provided that $d \geq C \frac{t^4}{\epsilon^2} \log^2\left(2 \frac{t^2}{\epsilon^2} \right) n$.
Paouris' inequality \eqref{subexp1} directly yields that for any $i \in [d]$, with probability at least $1-\delta$, one has 
$$| \mX_i | \leq \sqrt{n} + \frac{1}{c} \log(1/\delta) .$$
Furthermore, by a well known consequence of  Pr\'ekopa-Leindler's inequality, conditionally on $\mX_j$ one has for $i \neq j$ that $\mX_i^{\top} \frac{\mX_j}{|\mX_j|}$ is a centered, isotropic, log-concave random variable. In particular using  \eqref{subexp1} and independence of $\mX_i$ and $\mX_j$ one obtains that for $i \neq j$, with probability at least $1-\delta$, 
$$|\mX_i^{\top} \mX_j| \leq |\mX_j| \left(1 + \frac1{c} \log(1/\delta)\right) .$$
To use \eqref{normbound} we plug in $t=C \log(1/\delta)/\sqrt{n}$ for a suitable constant $C$ so that $\exp( - c t \sqrt{n} )$ is at most $\delta.$
Thus $\epsilon$ needs to be such that $d \geq C \frac{t^4}{\epsilon^2} \log^2\left(2 \frac{t^2}{\epsilon^2} \right) n.$ Also without loss of generality by possibly choosing the value of $t$ to be a constant times larger, we can assume $\frac{d}{t^2n}$ lies outside a fixed interval containing $1.$
A suitable value of $\epsilon$  can now be seen from the following string of inequalities, in which we use the fact that $x\log^2x$ is increasing outside a neighborhood of $1$ (the value of the constant $C$ will change from line to line):
$$
\begin{array}{cccc}
&\frac{d}{t^2n} &\ge&   C \frac{t^2}{\epsilon^2}\log^2(2\frac{t^2}{\epsilon^2}), \\\\
\text{if} & \frac{\epsilon^2}{t^2}  &\ge & C \log^2(\frac{d}{t^2n})\frac{t^2n}{d}, \\\\
\text{if} & \epsilon  &\ge & C\frac{t^2\sqrt{n} }{\sqrt{d}}|\log (\frac{d}{t^2n})|.
\end{array}
$$
%
Plugging in our choice of $t=C \log(1/\delta)/\sqrt{n},$ we see that any, $$\epsilon \ge C \sqrt{\frac{1}{dn}} \log^2(1/\delta)[\log(d)+\log\log(1/\delta)],$$ works.
Thus with probability at least $1-\delta$, 


\begin{equation} \label{eq:adamczak}
\Vert \frac{1}{d} \mX \mX^{\top} - \mI_n \Vert \leq C' \sqrt{\frac{1}{dn}} \log^2(1/\delta)[\log(d)+\log\log(1/\delta)].
\end{equation} 

Also note that if $\Vert A - \mI_n \Vert \leq \epsilon < 1$ then $\Vert A^{-1} - \mI_n \Vert \leq \frac{\epsilon}{1-\epsilon}$. From now on $C$ denotes a universal constant whose value can change at each occurence. Putting together all of the above with a union bound, we obtain for $d \geq C n^2$ that with probability at least $1-1/d$, simultaneously for all $i \neq j$,
\begin{align*}
& |\mX_i| \leq C (\sqrt{n} + \log(d)), \\
& |\mX_i^{\top} \mX_j| \leq C (\sqrt{n} \log(d) + \log^2(d)), \\
& \Vert \big(\frac{1}{d} \mX \mX^{\top}\big)^{-1} - \mI_n \Vert  \leq C {\frac{1}{\sqrt{n}}}, 
\end{align*}
where the last inequality follows from \eqref{eq:adamczak} by plugging in $\delta=1/d$ and using the fact that $\log^4(d)=o(\sqrt{d}).$
This yields (using the bounds in the previous page) that with probability at least $1-\frac{1}{d}$ simultaneously for all $i \neq j$,
$$|(A^{\top} A)_{i,j}| \leq C \frac{\sqrt{n} \log(d) + \log^2(d)}{d} ,$$
and
$$|(A^{\top} A)_{i,i}| \leq C \frac{n + \log^2(d)}{d} .$$
Thus denoting $\epsilon = \max_{i\in [d]} (A^{\top} A)_{i,i}$ and $\zeta = \max_{i,j \in [d], i \neq j} |(A^{\top} A)_{i,j}|$ one has:
$$\E \min(\epsilon + \zeta^2 d, 1) \leq  C \frac{n \log^2(d) + \log^4(d)}{d}.$$ 
By  Theorem \ref{th:highdimCLT}, this bounds one of the terms in the upper bound in \eqref{eq:lefttodo}. Thus to complete the proof of Theorem \ref{th:mainresult2} all that is left to do is bound the term $\E_{\mX}[ \ -\mathrm{logdet} (\frac{1}{d} \mX \mX^{\top})]$.
That is the goal of the next section.
 
\section{Small ball probability estimates} \label{sec:smallball}

\begin{lemma}There exists universal $C>0$ such that for $d \geq C n^2,$
\begin{equation} \label{eq:finalsmallball} \E \left( -\mathrm{logdet} \big(\frac{1}{d} \mX \mX^{\top}\big) \right)  \leq C \left(\sqrt{\frac{n}{d}} + \frac{n^2}{d} \right).
\end{equation}
\end{lemma}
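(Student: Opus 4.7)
The plan is to decompose the determinant via Gram--Schmidt on the rows of $\mX$. Write $X_1,\ldots,X_n \in \R^d$ for the rows and $d_i$ for the distance from $X_i$ to $V_{i-1} := \mathrm{span}(X_1,\ldots,X_{i-1})$; the standard volume formula gives $\det(\tfrac{1}{d}\mX\mX^\top) = \prod_{i=1}^n d_i^2/d$, so
\[
-\mathrm{logdet}\bigl(\tfrac{1}{d}\mX\mX^\top\bigr) \;=\; \sum_{i=1}^n \log(d/d_i^2).
\]
By the tower property it suffices to bound $\E\bigl[\log(d/d_i^2)\bigm| X_1,\ldots,X_{i-1}\bigr]$ for each $i$. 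Conditionally on the previous rows, $d_i^2 = |Z|^2$ where $Z := P_{V_{i-1}^\perp} X_i$ is log-concave isotropic on the $m_i$-dimensional subspace $V_{i-1}^\perp$ (with $m_i := d-i+1$): log-concavity is preserved under projection by Prekopa--Leindler, and isotropy follows since $X_i$ is isotropic on $\R^d$. In particular $\E|Z|^2 = m_i$.

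Next, exchange $\E\log|Z|^2$ for $\E[m_i/|Z|^2]$ using concavity of $\log$. From the pointwise inequality $\log x \geq \log m_i - (m_i - x)/x$ evaluated at $x = |Z|^2$,
\[
\log d - \E\log|Z|^2 \;\leq\; \log(d/m_i) + \bigl(\E[m_i/|Z|^2] - 1\bigr).
\]
The algebraic identity $m/W - 1 = -(W-m)/m + (W-m)^2/(mW)$, combined with $\E(|Z|^2 - m_i) = 0$, gives
\[
\E[m_i/|Z|^2] - 1 \;=\; \frac{1}{m_i}\,\E\!\left[\frac{(|Z|^2 - m_i)^2}{|Z|^2}\right],
\]
reducing everything to showing $\E\bigl[(|Z|^2 - m_i)^2/|Z|^2\bigr] = O(1)$.

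To control this expectation, split on $\{|Z|^2 \geq m_i/2\}$ and its complement. On the good event, $(|Z|^2-m_i)^2/|Z|^2 \leq (2/m_i)(|Z|^2-m_i)^2$; the variance formula for quadratic forms in i.i.d.\ entries gives $\Var(|Z|^2) = 2\mathrm{Tr}(P^2) + \kappa_4 \sum_j P_{jj}^2 = O(m_i)$ (where $P$ is the orthogonal projection onto $V_{i-1}^\perp$, so $P^2 = P$ and $P_{jj} \in [0,1]$, and log-concavity of $\mu$ makes the fourth cumulant $\kappa_4$ bounded), yielding $O(1)$ after dividing by $m_i$. On the bad event $\{|Z|^2 < m_i/2\}$, use $(|Z|^2 - m_i)^2/|Z|^2 \leq m_i^2/|Z|^2$ and evaluate $m_i^2 \,\E[\mathbf{1}_{|Z|^2 < m_i/2}/|Z|^2]$ by layer cake: for the very small regime use the Paouris small-ball bound $\P(|Z|\leq\epsilon\sqrt{m_i}) \leq (C\epsilon)^{\sqrt{m_i}}$, whose exponential decay in $\sqrt{m_i}$ dominates the $1/s^2$ singularity inside $\int \P(|Z|^2<s)/s^2\,ds$ for $m_i$ large; for the intermediate regime apply Chebyshev with $\Var(|Z|^2) = O(m_i)$ to get $\P(|Z|^2 < m_i/2) = O(1/m_i)$. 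Both contributions come out $O(1/m_i^2)$, so the bad event contributes $O(1)$ as well.

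Assembling, $\E\log(d/d_i^2) \leq \log(d/m_i) + O(1/m_i)$. Summing over $i=1,\ldots,n$: the principal sum is $\sum_{i=1}^n \log(d/(d-i+1)) \leq \sum_i[(i-1)/d + (i-1)^2/d^2] = O(n^2/d)$ using $-\log(1-x)\leq x+x^2$ for $x\leq 1/2$, while the error $\sum_i O(1/m_i) = O(n/d)$ is bounded by $O(\sqrt{n/d})$ whenever $d \geq n$, producing the advertised bound. The main technical obstacle is the bad-event analysis: the near-zero singularity of $1/|Z|^2$ must be tamed by the Paouris small-ball bound, whose $(C\epsilon)^{\sqrt{m_i}}$ decay is just strong enough to keep $m_i^2 \int \P(|Z|^2 < s)/s^2\,ds$ of order $O(1)$ rather than blowing up — a delicate balance that relies on $m_i = d-i+1$ being comfortably large in the regime $d\gg n^3$ of interest.
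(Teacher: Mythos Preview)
Your argument is correct and takes a genuinely different route from the paper's proof. The paper works globally with the spectrum of $\frac{1}{d}\mX\mX^\top$: it splits on the event $\{\lambda_{\min}\ge 1/2\}$, on which $-\log x \le (1-x)+(1-x)^2$ reduces matters to $\E|\mathrm{Tr}(\mI_n-\tfrac{1}{d}\mX\mX^\top)|$ and $\E\|\mI_n-\tfrac{1}{d}\mX\mX^\top\|_{\mathrm{HS}}^2$, while the complementary event is handled by the Adamczak--Litvak--Pajor--Tomczak-Jaegermann concentration inequality together with an $\epsilon$-net argument feeding into Paouris' small-ball bound. Your Gram--Schmidt decomposition instead reduces the problem row by row to a single log-concave vector $Z$ in dimension $m_i=d-i+1$, so that only elementary moment bounds for quadratic forms and the one-dimensional application of Paouris are needed --- no matrix concentration, no net argument, and no restriction $d\ge Cn^2$.

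There is even a small gain: because your inequality $\log x \ge \log m_i - (m_i-x)/x$ holds for all $x>0$, the linear term $\E[(|Z|^2-m_i)/m_i]$ cancels exactly, and you end up with $\sum_i\bigl(\log(d/m_i)+O(1/m_i)\bigr)=O(n^2/d)$. The paper's $\sqrt{n/d}$ term arises precisely from bounding $\E|\mathrm{Tr}(\mI_n-\tfrac{1}{d}\mX\mX^\top)|$ by Cauchy--Schwarz after restricting to the good event, where the corresponding cancellation is no longer exact; your route sidesteps this loss. Two small points worth making explicit when you write it up: (i) $m_i=d-i+1$ almost surely because log-concave rows are a.s.\ linearly independent, so the conditional bounds depend only on this deterministic dimension; (ii) the Paouris integral $\int_0^{cm_i}\P(|Z|^2<s)/s^2\,ds$ requires $\sqrt{m_i}>3$ or so to converge, which holds as soon as $d\ge n+C$ --- a far milder condition than the $d\gg n^3$ you allude to at the end.
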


\begin{proof}
 We decompose this expectation on the event (and its complement) that the smallest eigenvalue $\lambda_{\mathrm{min}}$ of $\frac{1}{d} \mX \mX^{\top}$ is less than $1/2$. We first write, using $-\log(x) \leq 1-x + 2(1-x)^2$ for $x \geq 1/2$,
$$\E \left( -\mathrm{logdet} \big(\frac{1}{d} \mX \mX^{\top}\big) \ds1\{\lambda_{\mathrm{min}} \geq 1/2\} \right) \leq \E \left( \left|\mathrm{Tr} \big(\mI_n - \frac{1}{d} \mX \mX^{\top}\big) \right| + 2\left\Vert \mI_n - \frac{1}{d} \mX \mX^{\top} \right\Vert_{\mathrm{HS}}^2 \right) ,$$
where $||\cdot||_{\mathrm{HS}}$ denotes the Hilbert-Schmidt norm.
Denote $\zeta$ for the $4^{th}$ moment of $\mu$. Then one has,(recall that $X_i \in \R^d$ denotes the $i^{th}$ row of $\mX$),
$$\E \ \left|\mathrm{Tr} \big(\mI_n - \frac{1}{d} \mX \mX^{\top}\big) \right| \leq \sqrt{\E \left(\mathrm{Tr} \big(\mI_n - \frac{1}{d} \mX \mX^{\top}\big) \right)^2} = \sqrt{\E \left( \sum_{i=1}^n (1 - |X_i|^2 / d) \right)^2} = \sqrt{(\zeta-1) \frac{n}{d}} .$$
Similarly one can easily check that,
$$\E \ \left\Vert \mI_n - \frac{1}{d} \mX \mX^{\top} \right\Vert_{\mathrm{HS}}^2 = \left(\sum_{i,j=1}^n \frac{1}{d^2} \E \ \langle X_i, X_j \rangle^2\right) - n = \frac{n^2 - n}{d}+ \frac{n}{d}(\zeta - 1) \leq \frac{n^2}{d} .$$
Next note that by log-concavity of $\mu$ one has $\zeta \leq 70$, and thus we proved (for some universal constant $C>0$):
\begin{equation} \label{eq:firstpart}
\E \left( -\mathrm{logdet} \big(\frac{1}{d} \mX \mX^{\top}\big) \ds1\{\lambda_{\mathrm{min}} \geq 1/2\} \right)  \leq C \left(\sqrt{\frac{n}{d}} + \frac{n^2}{d} \right).
\end{equation}
We now take care of the integral on the event $\{\lambda_{\mathrm{min}} < 1/2\}$. First observe that for a large enough constant $C>0,$ \eqref{eq:adamczak}  gives for $d \geq C$,
$\P(\lambda_{\mathrm{min}} < 1/2)\leq \exp(- d^{1/10}).$
In particular we have for any $\xi \in (0,1)$:
\begin{align}
\E \left( -\mathrm{logdet} \big(\frac{1}{d} \mX \mX^{\top}\big) \ds1\{\lambda_{\mathrm{min}} < 1/2\}\right) & \leq n \E \left( -\log(\lambda_{\mathrm{min}}) \ds1\{\lambda_{\mathrm{min}} < 1/2\} \right) \notag \\
& = n \int_{\log(2)}^{\infty} \P(-\log(\lambda_{\mathrm{min}}) \geq t) dt \notag \\
& = n \int_{0}^{1/2} \frac{1}{s} \P(\lambda_{\mathrm{min}} < s) ds \notag \\
& \leq \frac{n}{\xi} \exp(-d^{1/10}) + n \int_{0}^{\xi} \frac{1}{s} \P(\lambda_{\mathrm{min}} < s) ds . \label{eq:integralsmallball}
\end{align}
We will choose $\xi$ to be a suitable power of $d$ and the proof will be complete once we control $\P(\lambda_{\mathrm{min}} < s)$ for 
$s\le \xi$. This essentially boils down to estimation of certain small ball probabilities. 
We proceed by bounding the maximum eigenvalue, $\lambda_{\mathrm{max}},$ using a standard net argument.   Note that for any $\epsilon$- net $\mathcal{N}_{\epsilon}$ on $S^{n-1},$

$$\lambda_{\mathrm{max}}= \sup_{\theta \in \mS^{n-1}}  \theta^{\top} \frac{\mX \mX^{\top}}{d} \theta \le \frac{1}{(1-\epsilon)^2} \sup_{\theta \in \mathcal{N}_{\epsilon}}  \theta^{\top} \frac{\mX \mX^{\top}}{d} \theta.$$
Choosing $\epsilon=1/2$ gives $|\mathcal{N}_{\epsilon}| \le 5^n.$
Putting everything together along with subexponential tail of isotropic log-concave random variables (see \eqref{subexp1}) we get, 
%
$\P(\lambda_{\mathrm{max}} > M) \le 5^n \exp(-c \sqrt{M d}),$
(for more details see \cite{RV10}).
Similarly observe,
$$\P(\lambda_{\mathrm{min}} < s) = \P\left( \exists \theta \in \mS^{n-1} : \theta^{\top} \frac{\mX \mX^{\top}}{d} \theta < s \right)  = \P\left( \exists \theta \in \mS^{n-1} : |\mX^{\top} \theta| < \sqrt{s d} \right) .$$
Furthermore, if $|\frac{1}{\sqrt{d}} \mX^{\top} \theta| < \sqrt{s}$ for some $\theta \in \mS^{n-1}$, then one has for any $\phi \in \mS^{n-1}$, $|\frac{1}{\sqrt{d}} \mX^{\top} \phi| < \sqrt{s} + \sqrt{\lambda_{\mathrm{max}}} |\theta - \phi|$. Thus we get by choosing $\phi$ to be in a $s$- net $\mathcal{N}_{s}$ :
$$\P(\lambda_{\mathrm{min}} < s) \leq \left( \frac{3}{s} \right)^n \sup_{\phi \in \mathcal{N}_{s}}\P(|\mX^{\top} \phi| < 2 \sqrt{s d}) + \P(\lambda_{\mathrm{max}} > 1/s) .$$
We now use the Paouris small ball probability bound \cite[Theorem 2]{guedon}, (see also \cite{Pao12}) which states that for an isotropic centered log-concave random variable $Y \in \R^d$, and any $\epsilon \in (0,1/10)$, one has,
$$\P(|Y| \leq \epsilon \sqrt{d}) \leq (c \epsilon)^{\sqrt{d}} ,$$ for some universal constant $c>0.$
As $\mX^{\top} \phi$ is an isotropic, centered, log-concave random variable, we obtain for $d \geq C n^2,$
$$\P(\lambda_{\mathrm{min}} < s) \leq (c s)^{C \sqrt{d}} + \exp(-C/\sqrt{s}) .$$
Finally plugging this back in \eqref{eq:integralsmallball} and choosing $\xi$ to be a suitable negative power of $d,$ we obtain for $d \geq C n^2$,
$$\E \left( -\mathrm{logdet} \big(\frac{1}{d} \mX \mX^{\top}\big) \ds1\{\lambda_{\mathrm{min}}< 1/2\} \right) \leq  n \exp(- d^{1/20}),$$
and thus together with \eqref{eq:firstpart} it yields \eqref{eq:finalsmallball}.
\end{proof}

\section{Proof of \eqref{eq:remarkable}}\label{finalsec}
Recall that $Z \in \R^d$ is a random variable with a twice continuously differentiable density $w$ such that $\int \frac{|\nabla w|^2}{w} < \infty$ and $\int \Vert \nabla^2 w \Vert < \infty$, $h$ is the density of $A Z \in \R^n$ (with $A A^{\top} = \mI_n$), and also we fix $e\in \R^n$ and a sufficiently smooth map\footnote{For instance it is enough that $p$ is twice continuously differentiable, and that the coordinate functions $p_i$ and their derivatives $\frac{\partial p_i}{\partial x_i}$, $\frac{\partial p_i}{\partial x_j}$, $\frac{\partial^2 p_i}{\partial x_i \partial x_j}$ are bounded.} $p : \R^d \rightarrow \R^d$ with $A p(x) = e, \forall x \in \R^d$. We want to prove:
\begin{equation} \label{eq:remarkablerepeat}
e^{\top} I(h) e \leq \int_{\R^d} \bigg( \mathrm{Tr}(Dp^2) + p^{\top} \nabla^2 (- \log w) p \bigg) w \ .
\end{equation}
First we rewrite the right hand side in \eqref{eq:remarkablerepeat} as follows:
$$\int_{\R^d} \bigg( \mathrm{Tr}(Dp^2) + p^{\top} \nabla^2 (- \log w) p \bigg) w  = \int_{\R^d} \frac{(\nabla \cdot (pw))^2}{w} \ .$$
The above identity is a straightforward calculation (with several applications of the one-dimensional integration by parts, which are justified by the assumptions on $p$ and $w$), see \cite{BBN03} for more details. Now we rewrite the left hand side of \eqref{eq:remarkablerepeat}. Using the notation $g_x$ for the partial derivative of a function $g$ in the direction $x$, we have
$$e^{\top} I(h) e = \int_{\R^n} \frac{h_e^2}{h} \ .$$
Next observe that for any $x \in \R^n$ one can write $h(x) = \int_{E^{\perp}} w(A^{\top} x + \cdot)$ where $E \subset \R^d$ is the $n$-dimensional subspace generated by the orthonormal rows of $A$, and thus thanks to the assumptions on $w$ one has:
$$h_e(x) = \int_{A^{\top} x + E^{\perp}} w_{A^{\top} e} = \int_{A^{\top} x + E^{\perp}} \nabla \cdot \left((A^{\top} e) w\right) \ .$$
The key step is now to remark that the condition $\forall x, A p(x) = e$ exactly means that the projection of $p$ on $E$ is $A^{\top} e$, and thus by the Divergence Theorem one has 
$$\int_{A^{\top} x + E^{\perp}} \nabla \cdot \left((A^{\top} e) w\right) = \int_{A^{\top} x + E^{\perp}} \nabla \cdot (p w) \ .$$ 
The proof is concluded with a simple Cauchy-Schwarz inequality:
$$e^{\top} I(h) e = \int_{\R^n} \frac{\left(\int_{A^{\top} x + E^{\perp}} \nabla \cdot (p w) \right)^2}{\int_{A^{\top} x + E^{\perp}} w} \leq \int_{\R^n} \int_{A^{\top} x + E^{\perp}} \frac{(\nabla \cdot (p w))^2}{w} = \int_{\R^d} \frac{(\nabla \cdot (p w))^2}{w} \ .$$

\section{Open problems} \label{sec:open}
This work leaves many questions open. A basic question is whether one could get away with less independence assumption on the matrix $\mX$. Indeed several of the estimates in Section \ref{sec:highdimentropicCLT} and Section \ref{sec:smallball} would work under the assumption that the rows (or the columns) of $\mX$ are i.i.d. from a log-concave distribution in $\R^d$ (or $\R^n$). However it seems that the core of the proof, namely the induction argument from Section \ref{sec:induction}, breaks without the independence assumption for the entries of $\mX$. Thus it remains open whether Theorem \ref{th:mainresult} is true with only row (or column) independence for $\mX$. We note that the case of row independence is probably much harder than column independence.
\newline

As we observed in Section \ref{sec:related}, a natural alternative route to prove Theorem \ref{th:mainresult} (or possibly a variant of it with a different metric) would be to use Stein's method. A straightforward application of existing results yield the suboptimal dimension dependency $d \gg n^6$ for convergence, and it is an intriguing open problem whether the optimal rate $d \gg n^3$ can be obtained with Stein's method. 
\newline

In this paper we consider Wishart matrices with zeroed out diagonal elements in order to avoid further technical difficulties (also for many applications -such as the random geometric graph example- the diagonal elements do not contain relevant information). We believe that Theorem \ref{th:mainresult} remains true with the diagonal included (given an appropriate modification of the Gaussian ensemble). The main difficult is that in the chain rule argument one will have to deal with the law of the diagonal elements conditionally on the other entries. We leave this to further works, but we note that when $\mu$ is the standard Gaussian it is easy to conclude the calculations with these conditional laws.
\newline

In \cite{Eld15} it is proven that when $\mu$ is a standard Gaussian and $d/n \rightarrow +\infty$, one has $\mathrm{TV}(\cW_{n,d}(\mu), \cW_{n,d+1}(\mu)) \rightarrow 0$. It seems conceivable that the techniques develop in this paper could be useful to prove such a result for a more general class of distributions $\mu$. However a major obstacle is that the tools from Section \ref{sec:highdimentropicCLT} are strongly tied to measuring the relative entropy with respect to a standard Gaussian (because it maximizes the entropy), and it is not clear at all how to adapt this part of the proof.
\newline

Finally one may be interested in understanding CLT of the form \eqref{eq:mainresult1} for higher-order interactions. More precisely recall that by denoting $\mX_i$ for the $i^{th}$ column of $\mX$ one can write $\mX \mX^{\top} = \sum_{i=1}^d \mX_i \otimes \mX_i = \sum_{i=1}^d \mX_i^{\otimes 2}$. For $p \in \N$ we may now consider the distribution $\cW_{n,d}^{(p)}$ of  $\frac{1}{\sqrt{d}} \sum_{i=1}^d \mX_i^{\otimes p}$ (for sake of consistency we should remove the non-principal terms in this tensor). The measure $\cW_{n,d}^{(p)}$ have recently gained interest in the machine learning community, see \cite{AGHKT14}. It would be interesting to see if the method described in this paper can be used to understand how large  $d$ needs to be as a function of $n$ and $p$ so that $\cW_{n,d}^{(p)}$ is close to being a Gaussian distribution.

\section*{Acknowledgements}
We are grateful to the anonymous referees for the suggestions that helped improve the paper.
This work was completed while S.G. was an intern at Microsoft Research in Redmond. He thanks the Theory group for its hospitality.

\bibliographystyle{plainnat}
\bibliography{convexbib}
\end{document}